\def\theequation{\thesection.\@arabic \c@equation}
\def\theenumi{\@roman\c@enumi}
\def\@citecolor{blue}
\def\@linkcolor{blue}
\def\@urlcolor{blue}
\newtheorem{theorem}[equation]{Theorem}
\newtheorem{corollary}[equation]{Corollary}
\newtheorem{claim*}{Claim}
\theoremstyle{definition}
\newtheorem{remark}[equation]{Remark}
\newtheorem{eg}[equation]{Example}
\newenvironment{example}[1][]{\begin{eg}[#1] \pushQED{\qed}}{\popQED
\end{eg}}
\newtheorem{definition}[equation]{Definition}
\newtheorem{notn}[equation]{Notation}
\newsavebox{\upperboundtheorem}
\title{A note on the subadditivity of Syzygies}
\author[S.~El~Khoury]{Sabine El Khoury} 
\address{Department of Mathematics, American University of Beirut, Beirut,
Lebanon.}
\email{se24@aub.edu.lb} 
\author[H.~Srinivasan]{Hema Srinivasan}
\address{Department of Mathematics, University of Missouri, Columbia,
Missouri-65211, USA.}
\email{srinivasanh@missouri.edu}
\begin{document}

 \begin{abstract}
Let $R=S/I$ be a graded algebra  with $t_i$ and $T_i$ being the minimal and maximal shifts in the minimal graded free resolution of $R$ at degree $i$.  We prove that  $t_n\leq t_1+T_{n-1}$ for all $n$. As a consequence, we show that for Gorenstein algebras of codimension $h$, the subadditivity of maximal shifts $T_n$ in  the minimal graded free resolution holds for  $n \ge h-1$, i.e we show that $T_n \leq T_a+T_{n-a}$ for $n\ge h-1$. \end{abstract}

\maketitle

\section{Introduction}  
Let $R= S /I$ be a standard graded algebra, with $S$ a polynomial ring over a field $k$.    Denote  by $(\mathbb F, \partial) $ a minimal graded free resolution of $R$  over $S$ with  $\mathbb F_a= \oplus_j S(-j)^{\beta_{aj}}$.  For each $a$,  denote by $T_a(\mathbb F)$ and $t_a(\mathbb F)$ the maximal and minimal shifts in the resolution $\mathbb F$.   In other words, 
$$t_a (\mathbb F)= min \left\{ j: \beta_{aj} \neq 0 \right\} $$
$$T_a(\mathbb F)= max \left\{ j: \beta_{aj} \neq 0 \right\}.$$
$\mathbb F$ is said to satisfy the {\it subadditivity} condition for maximal shifts if $T_{a+b}(\mathbb F) \leq T_a(\mathbb F)+T_b(\mathbb F)$, for all $a$ and $b$.   
If $\mathbb F$ is the minimal graded free resolution of $R$ over $S$, then we write $t_a$ and $T_a$  for $t_a(\mathbb F)$ and   $T_a(\mathbb F)$.   

General bounds for $T_n$ in terms of $T_1$ were established for all $n$, by  Bayer-Mumford and Caviglia-Sbarra in \cite {BM} and \cite{CaS}. They prove that $T_n \leq (2T_1)^{2^{m-2}} -1+n$ where $m=$rank$_kS_1$. Subadditivity for maximal shifts in minimal graded free resolutions has been studied by many authors \cite{ACI2}, \cite{EHU}, \cite{HS}, \cite {Mc}.  Subadditivity for maximal shifts has been established for the highest non vanishing Tor, $Tor_p( R, k) $. It was shown that $T_p\le T_1+T_{p-1}$ for all graded algebras where $p = $projdim$R$ \cite[Corollary 3]{HS}, and that $T_p\le T_a+T_{p-a}$ when $R$ is of depth zero and of dimension $\le 1$ \cite[Corollary 4.1]{EHU}.  In \cite{ACI2}, Avramov, Conca and Iyengar consider the situation when $R=S/I$ is Koszul and show that $T_{a+1}(I) \leq T_a+T_1 = T_a+2$ for  $a \leq$ height $(I)$. Further, they prove that if $S$ is Cohen-Macaulay, $reg^R_{h+1}(k) = 0$ and ${a+b \choose a}$ is invertible in $k$, then  $T_{a+b} \leq T_a+T_b+ 1$.  For monomial ideals, Herzog and Srinivasan show that $T_{a+1} \le T_a+T_1$, for all $a$  \cite[Corollary 4]{HS}. 

 It is known that the minimal graded free resolution of graded algebras may not satisfy the subadditivity for maximal shifts as shown by the counter example in \cite{ACI2}.  However, no counter examples are known for monomial ideals.  In fact, it can be easily seen that the subadditivity for maximal shifts does hold for algebras generated by stable monomial or square free strongly stable monomial ideals.   

In this paper, we consider the minimal and maximal degrees of the syzygies and show that for any $n$ we have $t_n \le t_1+T_{n-1}$.  As a consequence, we prove 
  for  Gorenstein algebras $S/I$  that $T_{a+b} \le T_a+T_b$ for  $a+b \ge s-1$ where $s =$ height $I =$ projdim $R$.   The question remains if the subadditivity for maximal shifts holds for all Gorenstein algebras.  It is particularly interesting since the simplest counter example to whether the subadditivity holds, is a codimension four  almost complete intersection linked to a codimension four Gorenstein algebra.  

\section{Syzygies of Gorenstein Algebras}

 Let $S= k[x_1,\ldots,x_d] $ and $I$ a homogeneous ideal in $S$. Let $\mathbb F$ be the minimal graded free resolution of $R=S/I$
 $$\mathbb F:  0 \to F_s \to \ldots \to F_2 \stackrel{\partial_2}{\longrightarrow} F_1 \stackrel{\partial_1}{\longrightarrow} S$$
 where $F_a= \oplus_j S(-j)^{\beta_{aj}}$, $t_a = min \left\{ j: \beta_{aj} \neq 0 \right\}$ and $T_a=  max \left\{ j: \beta_{aj} \neq 0 \right\}$. We prove a general result for the syzygies of homogeneous algebras. 
 
 \begin{theorem} \label{general}
 $R=S/I$ be a graded algebras with $t_i$ and $T_i$ being the minimal and maximal shifts in the minimal graded free $S$-resolution of $R$ at degree $i$, then $t_n\leq t_1+T_{n-1}$, for all $n$.
 \end{theorem}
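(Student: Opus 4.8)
The plan is to reduce the statement to the existence of a single low-degree syzygy and then to produce that syzygy from a null-homotopy. Write $\Omega_n:=\operatorname{im}\partial_n\subseteq F_{n-1}$ for the $n$-th syzygy module. Since minimality of $\mathbb F$ gives $\ker\partial_n=\operatorname{im}\partial_{n+1}\subseteq\mathfrak m F_n$, the map $F_n\to\Omega_n$ is a minimal cover, so the generators of $F_n$ are exactly the minimal generators of $\Omega_n$ and $t_n=\min\{\,d:(\Omega_n)_d\neq 0\,\}$. Hence it suffices to exhibit one nonzero homogeneous element of $\Omega_n$ of degree at most $t_1+T_{n-1}$.

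For $n=2$ this is immediate and guides the general case: $\Omega_2=\ker\partial_1$ is the syzygy module of $I$, and if $f_{\min},f_{\max}$ are minimal generators of $I$ of degrees $t_1,T_1$ with basis vectors $e_{\min},e_{\max}$, then the Koszul syzygy $f_{\max}e_{\min}-f_{\min}e_{\max}$ is a nonzero element of $\Omega_2$ of degree $t_1+T_1$. To imitate this for all $n$, I would fix a minimal generator $f\in I$ of degree $t_1$. As $f$ annihilates $R$, multiplication by $f$ on $\mathbb F$ is null-homotopic, so there are maps $s_a\colon F_a\to F_{a+1}$ of internal degree $t_1$ with $\partial_{a+1}s_a+s_{a-1}\partial_a=f\cdot\mathrm{id}$. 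Choosing a basis vector $\omega$ of $F_{n-1}$ of top degree $T_{n-1}$, I would set
\[
z:=\partial_n\bigl(s_{n-1}(\omega)\bigr)=f\omega-s_{n-2}\bigl(\partial_{n-1}(\omega)\bigr)\in\Omega_n,
\]
a homogeneous element of degree exactly $t_1+T_{n-1}$.

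The crux is to show $z\neq 0$; once this is done, $\Omega_n$ is nonzero in degree $t_1+T_{n-1}$ and the theorem follows. I would test this on the coordinate of $\omega$: writing $\partial_{n-1}(\omega)=\sum_i m_i\eta_i$ over a basis $\{\eta_i\}$ of $F_{n-2}$, minimality forces each $m_i\in\mathfrak m$, and the $\omega$-coordinate of $s_{n-2}(\partial_{n-1}(\omega))$ lies in the ideal $(m_1,m_2,\dots)$. Thus the $\omega$-coordinate of $z$ equals $f$ minus a degree-$t_1$ element of $(m_1,m_2,\dots)$, and it is enough that $f\notin(m_1,m_2,\dots)$ in degree $t_1$. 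This is precisely what makes the $n=2$ case trivial, since there the $\omega$-column of $\partial_1$ is empty. The main obstacle is to secure it in general: the entries $m_i$ have degree $T_{n-1}-\deg\eta_i$, which can dip to $t_1$ or below when $F_{n-2}$ has generators of degree close to $T_{n-1}$, so a priori the homotopy term might cancel $f\omega$. I expect to resolve this by exploiting the maximality of $\deg\omega=T_{n-1}$ together with a judicious choice of $f$ among the degree-$t_1$ generators and of $\omega$ among the top-degree generators; equivalently, by reformulating the claim as the statement that $f$ times a top-degree minimal generator of $\Omega_{n-1}$ lies in the submodule generated by the remaining minimal generators of $\Omega_{n-1}$.

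Finally I would dispose of the boundary cases: $n=1$ holds because $T_0=0$, and when $F_n=0$ there is nothing to prove. The only genuine work is the nonvanishing of $z$, that is, controlling the homotopy contribution $s_{n-2}(\partial_{n-1}(\omega))$ so that it cannot absorb the term $f\omega$; everything else is the formal reduction recorded above.
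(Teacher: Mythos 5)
Your setup is sound and is, in substance, the paper's own construction written in homotopy language: the reduction to exhibiting a nonzero homogeneous element of $\Omega_n=\operatorname{im}\partial_n$ of degree at most $t_1+T_{n-1}$ is correct, and your element $z=\partial_n(s_{n-1}(\omega))=f\omega-s_{n-2}(\partial_{n-1}(\omega))$ is exactly the cycle the paper denotes $Z(f_{11},f_{(n-1)t})$, with $f_{11}*(-)$ playing the role of your homotopy $s$. But the step you yourself flag as the crux --- the nonvanishing of $z$ --- is left unproven, and it is the entire content of the theorem beyond $n=2$. Moreover, by insisting on a single top-degree basis vector $\omega$ you have committed to a statement that is harder than necessary and not obviously true: as you observe, when $F_{n-2}$ has generators of degree close to $T_{n-1}$ the homotopy term $s_{n-2}(\partial_{n-1}(\omega))$ has coordinates of low degree and could a priori absorb $f\omega$ for that particular $\omega$, and your proposed remedies (a judicious choice of $f$ and $\omega$, or a reformulation inside $\Omega_{n-1}$) are not carried out.

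The repair is to stop privileging the top-degree generator. For \emph{every} basis vector $f_{(n-1)t}$ of $F_{n-1}$ the element $z_t=f\,f_{(n-1)t}-s_{n-2}(\partial_{n-1}(f_{(n-1)t}))$ lies in $\Omega_n$ and has degree $t_1+\deg f_{(n-1)t}\le t_1+T_{n-1}$, so it suffices that \emph{some} $z_t$ be nonzero, i.e.\ that $f\cdot\mathrm{id}_{F_{n-1}}-s_{n-2}\circ\partial_{n-1}\neq 0$. This is where the paper does its real work, via a rank computation: if all $z_t$ vanished, then $s_{n-2}\circ\partial_{n-1}=f\cdot\mathrm{id}_{F_{n-1}}$ (in matrix form $\bar r\bar s=f\cdot I_{\beta_{n-1}}$), which forces $\partial_{n-1}$ to be injective (equivalently, of rank $\beta_{n-1}$) since $f\neq 0$ and $S$ is a domain; but then $\operatorname{im}\partial_n=\ker\partial_{n-1}=0$, contradicting $\partial_n\neq 0$ for $n$ at most the projective dimension --- the paper phrases this as $\operatorname{rank}\partial_n+\operatorname{rank}\partial_{n-1}=\beta_{n-1}$ by exactness. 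Without this (or an equivalent) argument your proof is incomplete; with it, your write-up becomes a clean version of the paper's.
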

 \begin{proof}  
 We show the theorem by induction on $n$, where $n$ is the $n^{th}$ step of the resolution.\\
For $n=1$, $T_0=0$ and hence $t_1 = t_1$.  We need to prove the theorem for $1< n\le s$,  where 
$$\mathbb F:  0 \to F_s \to \ldots \to F_2 \stackrel{\partial_2}{\longrightarrow} F_1 \stackrel{\partial_1}{\longrightarrow} S$$
is the graded resolution of $R$.   
 Let  $I=(g_1, \ldots, g_{\beta_1})$ where $\left\{g_1, \ldots, g_{\beta_1}\right\}$ is a set of minimal generators  of $I$ and $F_i = \bigoplus_{j=1}^{\beta_i} Sf_{ij} $ with $t_1=$deg$f_{11}\leq $deg$f_{12} \leq \cdots \leq$deg$f_{1\beta_1}=T_1$. Suppose  $ \partial_1(f_{1j})= g_j$ and $ \partial_{n}(f_{nt})= \sum_{i=1}^{ \beta_{n-1}} r_{ti} f_{(n-1)i}$ for all $n$. \\
  We see that   $Z(f_{11}, f_{1\beta_1}) = g_1f_{1 \beta_1}- g_{ \beta_1} f_{11}$ is a non zero second syzygy. Hence, there exists an element $f_{11}*f_{1\beta1} \in F_2$ of degree $t_1+T_1$. This implies that $t_2 \leq T_1 + t_1$, and the theorem is true for $n=2$.

Assume we have constructed by induction an element $f_{11}*f_{{n-2}i} \in F_{n-1}$ for all  $1 \leq i \leq \beta_{n-2}$ such that 
 $$\partial _{n-1} (f_{11}*f_{(n-2)i}) =  \partial_1(f_{11})f_{(n-2)i }- f_{11}*\partial _{n-2}(f_{(n-2)i}),$$  and $t_{n-1} \le t_1+T_{n-2}$ for $4 \leq n \leq s$. 
Let  
 $$f_{11}*f_{(n-2)i}= \sum_{j=1}^{\beta_{n-1}}s_{ij} f_{(n-1)j}.$$ We also have
  $ \partial_{n-1}(f_{(n-1)t})= \sum_{i=1}^{ \beta_{n-2}} r_{ti} f_{(n-2)i}$ for all $1 \leq t \le \beta_{n-1}$.
 Consider   $$Z(f_{11}, f_{n-1t}) = \partial_1(f_{11})f_{n-1t}- \sum_{i=1}^{\beta_{n-2}}\sum_{j=1}^{\beta_{n-1}}r_{ti}s_{ij}f_{(n-1)j}.$$ 
$Z(f_{11}, f_{n-1t})$ is a cycle in $F_{n-1}$, since
 \begin{align*}
 \partial_{n-1} (Z(f_{11}, f_{(n-1)t}))=& \partial_1(f_{11})\partial_{n-1}(f_{(n-1)t})- \sum_{i=1}^{\beta_{n-2}} r_{ti}\partial_{n-1}(\sum_{j=1}^{\beta_{n-1}}s_{ij}f_{(n-1)j})\\
  =& \partial_1(f_{11})\sum_{i=1}^{\beta_{n-2}}r_{ti}f_{(n-2)i}- \sum_{i=1}^{\beta_{n-2}} r_{ti}\partial_{n-1}(f_{11}*f_{(n-2)i})\\
 =&\sum_{i=1}^{\beta_{n-2}} \left[ \partial_1(f_{11})r_{ti}f_{(n-2)i}- \partial_1(f_{11})r_{ti}f_{(n-2)i}+ r_{ti}f_{11}*\partial _{n-2}(f_{(n-2)i}) \right]\\
=& \sum_{i=1}^{\beta_{n-2}}r_{ti} f_{11}*\partial _{n-2}(f_{(n-2)i}) \\
  =&  f_{11}*\partial _{n-2}( \sum_{i=1}^{\beta_{n-2}}r_{ti}f_{(n-2)i})\\
   =& f_{11}*\left(\partial _{n-2} \circ \partial_{n-1}(f_{(n-1)t}) \right) \\
    =& f_{11}*0\\
    =&0.\\
    \end{align*}
 Thus, for every $t$, $1\le t\le \beta_{n-1}$, there exists an element $f_{11}*f_{(n-1)t}$  $\in F_n$ of the same degree as deg$ Z(f_{11}, f_{(n-1)t}) $ that is mapped by $\partial_n$ onto $Z(f_{11}, f_{(n-1)t})$.  
  This means there is an element $f_{11}*f_{(n-1)t} \in F_n$ of degree $t_1+ deg f_{(n-1)t} \le t_1+T_{n-1}$ such that  $\partial_n( f_{11}*f_{(n-1)t})=Z(f_{11}, f_{(n-1)t}) $.   However, the cycles  $Z(f_{11}, f_{n-1t})$ can be zero for all $t$! For that, we show that there is at least one $t$  such that one of the cycles $Z(f_{11}, f_{n-1t})$  is not identically zero.  \\
Suppose that $Z(f_{11}, f_{n-1t}) =0$ for all $t$, then $\partial_1(f_{11})f_{n-1t}= \sum_{i=1}^{\beta_{n-2}}\sum_{j=1}^{\beta_{n-1}}r_{ti}s_{ij}f_{n-1j}  $. 
So for all $t$, we have
$$ \sum_{i=1}^{\beta_{n-2}} \sum_{j=1}^{\beta_{n-1}}r_{ti}s_{ij} = \begin{cases} 0&  j \neq t \\
\partial_1 (f_{11})&  j =t.
\end{cases}$$
 
By setting  $\bar{r}=\bar r_{n-1} = (r_{ti})_{\beta_{n-1} \times \beta_{n-2}}$ and $ \bar s =(s_{ij})_{\beta_{n-2} \times \beta_{n-1}}$,  we get $\bar r \bar s= \partial_1 (f_{11}) I$ and hence the rank $\bar r_{n-1} \bar s=\beta_{n-1}$.  By the exactness of the resolution, we have  rank $ \bar r_{n }+ $ rank  $ \bar r _{n-1}= \beta_{n-1}$ where $ \bar r_{n}$ is the matrix representing $\partial_{n}$.  Since $\partial _n \neq 0$, this implies that rank $\bar  r_{n-1} < \beta_{n-1}$ which is a contradiction.   

Hence, there exists a non zero cycle in $F_{n-1}$, and we get $t_n \leq t_1+ T_{n-1}$.
 \end{proof}
  
  The following remark is an easy consequence of the  duality of the minimal graded free resolution for Gorenstein algebras.  
 \begin{remark} If $R=S/I$ is a Gorenstein algebra, with height $I = h$,
then $T_{h} \leq T{_a}+T_{h-a}$.\\
Since $c= T_h=t_h$, then  by the duality of the minimal graded free resolution $\mathbb F$ we get $c-t_{h-a}=T_a$ for all $a=1, \cdots h-1$. This implies that $c=T_a+t_{h-a} \leq T_a+T_{h-a}$.\\
  \end{remark}
 
   \begin{theorem} For any graded Gorenstein algebra $S/I$ of codimension $h$, we have $T_{h-1}  \leq T_a + T_{h-1-a}$.
   Thus, $T_n\leq T_a+T_{n-a}$ for $n\geq h-1$.  
   \end{theorem}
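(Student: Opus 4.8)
The plan is to leverage the self-duality of the minimal graded free resolution of a Gorenstein algebra so as to convert the minimal-shift inequality of Theorem \ref{general} into the stated bound on the maximal shifts. Since $S/I$ is Gorenstein of codimension $h$, we have $\projdim R = h$, the last free module $F_h$ has rank one, and $\mathbb F$ is self-dual. Concretely, writing $c = t_h = T_h$ for the unique shift occurring in $F_h$, the Betti numbers satisfy $\beta_{ij} = \beta_{h-i,\,c-j}$ for all $i$ and $j$.

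First I would record the two consequences of this symmetry that drive the argument. Taking the maximum and the minimum over the degrees $j$ with a nonzero Betti number, the relation $\beta_{ij} = \beta_{h-i,\,c-j}$ yields
$$T_a = c - t_{h-a} \qquad \text{and} \qquad t_a = c - T_{h-a}$$
for every $a$. Specialising the first identity at $a = h-1$ gives $T_{h-1} = c - t_1$, while the second identity, read at index $a+1$, gives $T_{h-1-a} = c - t_{a+1}$.

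Next I would invoke Theorem \ref{general} at step $n = a+1$, which provides the key inequality $t_{a+1} \le t_1 + T_a$. Feeding this into the two identities above, one obtains
$$T_{h-1} = c - t_1 \le c - t_{a+1} + T_a = T_{h-1-a} + T_a,$$
which is precisely the first assertion $T_{h-1} \le T_a + T_{h-1-a}$. To conclude the displayed statement $T_n \le T_a + T_{n-a}$ for $n \ge h-1$, I would observe that because $\projdim R = h$ there is nothing to prove once $n > h$; thus only $n = h-1$ and $n = h$ remain. The case $n = h-1$ is the inequality just established, and the case $n = h$ is exactly the content of the preceding Remark.

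I do not expect a genuine obstacle here: the whole argument rests on the observation that Gorenstein duality trades a minimal shift $t_{h-a}$ for the maximal shift $T_a$ via $T_a = c - t_{h-a}$. The one point requiring care is the bookkeeping of indices, since the shift by one passing from $t_1$ to $t_{a+1}$ is exactly what turns the minimal-end estimate of Theorem \ref{general} into subadditivity at the maximal end.
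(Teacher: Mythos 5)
Your argument is correct and is essentially the same as the paper's: both use the Gorenstein duality identities $T_{h-1}=T_h-t_1$ and $T_{h-1-a}=T_h-t_{a+1}$ together with Theorem \ref{general} applied at step $a+1$ to get $t_{a+1}\le t_1+T_a$, and then rearrange. Your added remarks disposing of the cases $n=h$ (via the preceding Remark) and $n>h$ (vacuous) only make explicit what the paper leaves implicit.
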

   \begin{proof} Since $S/I$ is Gorenstein, then $T_{h-1} =  T_h-t_1$ and $T_{h-1-a} = T_h-t_{a+1}$.  So, 
$T_{h-a-1} = T_h-t_{a+1} \ge T_h -( t_1+T_a)$ by theorem \ref{general}.
So, $T_{h-a-1} \ge T_h- t_1-T_a = T_{h-1}-T_a$ and hence $T_{h-1} \le T_a+T_{h-a-1}$ as desired. 
\end{proof}

\begin{definition} A minimal  graded free resolution is said to be pure if, for all $a$, $ F_a$ is generated in one degree. Hence the minimal graded free resolution is pure if it has the following shape
 $$\mathbb F:  0 \to S(-j_s)^{\beta_{s}} \to \ldots \to S(-j_2)^{\beta_{2}} \stackrel{\partial_2}{\longrightarrow} S(-j_1)^{\beta_{1}} \stackrel{\partial_1}{\longrightarrow} S$$
\end{definition}
   \begin{corollary} For any graded algebra $S/I$ with a pure resolution, we get $T_n  \leq T_1 + T_{n-1}$. 
   \end{corollary}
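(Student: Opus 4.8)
The plan is to reduce the claim directly to Theorem~\ref{general} by exploiting the defining feature of a pure resolution. The key observation is that purity collapses the distinction between minimal and maximal shifts: since each free module $F_a$ is generated in a single degree $j_a$, we have $\beta_{aj} \neq 0$ only for $j = j_a$, and therefore
\[
t_a = T_a = j_a \quad \text{for all } a.
\]
In particular $t_1 = T_1$ and $t_n = T_n$. This is really the only structural input the proof requires.

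With this in hand, I would simply invoke Theorem~\ref{general}, which gives the bound $t_n \le t_1 + T_{n-1}$ on minimal shifts for every graded algebra, with no hypothesis beyond gradedness. Substituting $t_n = T_n$ and $t_1 = T_1$ into that inequality converts it into a statement about maximal shifts, namely $T_n \le T_1 + T_{n-1}$, which is exactly the desired conclusion. The natural way to present this is as a one-line specialization: first record $t_a = T_a$ for pure resolutions, then apply Theorem~\ref{general} and read off the result.

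I do not expect any genuine obstacle here, since the entire content of the corollary is the equality $t_a = T_a$ forced by purity; once that is noted, the inequality follows immediately from the already-established bound. The only point worth stating carefully is the conceptual one: Theorem~\ref{general} bounds a minimal shift by a minimal shift plus a maximal shift, and it is precisely the coincidence $t_a = T_a$ in the pure case that allows one to upgrade this mixed bound into a clean subadditivity statement $T_n \le T_1 + T_{n-1}$ for the maximal shifts themselves.
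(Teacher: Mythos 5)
Your proposal is correct and matches the paper's proof exactly: the paper likewise observes that purity forces $t_a = T_a$ for all $a$ and then applies Theorem~\ref{general} to conclude $T_n \le T_1 + T_{n-1}$. No issues.
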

   
   \begin{proof} When the resolution is pure we have $t_a=T_a$ for all $a$, and hence by theorem \ref{general} we get the result. 
   \end{proof}
   
   \begin{remark} As a consequence, if $S/I$ is a Gorenstein pure algebra of codimension $4$ or $5$, then its minimal graded free resolution satisfies the subadditivity condition.  
   \end{remark}
\begin{remark} As the following three examples show, the inequality can be tight even for codimension four Gorenstein algebras  with pure resolutions.
\end{remark}

\begin{example} Let $S = k[x,y,z,w]$ and $I = ( w^2, z^2, y^2-wz, wx^2, x^2z, x^2y+wyz, x^3+wxz+wyz)$ a height four Gorenstein ideal. We have $T_4=8$, $T_3=6$ $T_2=4$ and $T_1=3$. We get \\
$T_4=2T_2 \leq T_3+T_1$ and $T_3 \leq T_2+T_1$. 
\end{example}

\begin{example} Let $S=k[x, y, z, w]$ and $I=(w^3, w^2z, w^2y, wz^2, wyz-w^2x, wy^2, z^3+wxy, yz^2, xz^2, \\ y^2z, xyz-wx^2, x^2z, y^3-2wxz, xy^2, x^2y+w^2x, x^3+wxz-w^2x)$  a height four Gorenstein ideal generated in degree three and having a pure resolution.  Then the minimal graded free resolution of $S/I$ satisfies the subadditivity condition. We have $T_4=8$, $T_3=5$, $T_2=4$ and $T_1=3$. Then we get $T_4=2T_2= T_3+T_1$, $T_3 \leq T_2+T_1$ and  $T_2 \leq 2T_1$.
\end{example}

\begin{example} Let $S= k[x,y,z, w,r,s,t]$ and $I =( rt, tz, ty-st, tx, sw-s^2, sz, sy, sx, rw-st-t^2, rz, yr+tw, rx-s^2, w^2+st, wz, wy-s^2, wx, yz-z^2+r^2+rs+tw-st-t^2, xz-rs-tw+st+t^2, y^2, xy-z^2-r^2+rs+tw-st-t^2, x^2+rs+tw-st-t^2)$ a height seven Gorenstein ideal with $T_7=10$, $T_6=8$, $T_5=7$, $T_4=6$, $T_3=5$, $T_2=3$ and $ T_1=2$. Then we get\\
$T_7= T_6+T_1=T_5+T_2  \leq T_4+T_3$ and $T_6 \leq T_5+T_1=T_4+T_2 \leq 2T_3$.

\end{example}

\providecommand{\bysame}{\leavevmode\hbox to3em{\hrulefill}\thinspace}
\providecommand{\MRhref}[2]{%
  \href{http://www.ams.org/mathscinet-getitem?mr=#1}{#2}
}
\providecommand{\href}[2]{#2}

\end{document}